\numberwithin{equation}{section}
\DeclareMathOperator{\E}{\mathbb{E}}
\DeclareMathOperator{\argmax}{argmax}
\def \R {\mathbb{R}}
\newtheorem{theorem}{Theorem}[section]
\newtheorem{proposition}[theorem]{Proposition}
\newtheorem{remark}[theorem]{Remark}
\begin{document}
\bibliographystyle{abbrv}

\title{Sketching for Motzkin's Iterative Method for Linear Systems}
\author[*,1]{Elizaveta Rebrova}
\author[*,2]{Deanna Needell}
\affil[*]{Department of Mathematics, University of California, Los Angeles, Los Angeles, California}
\affil[1]{rebrova@math.ucla.edu}
\affil[2]{deanna@math.ucla.edu}



\maketitle
\begin{abstract}
Projection-based iterative methods for solving large over-determined linear systems are well-known for their simplicity and computational efficiency. It is also known that the correct choice of a sketching procedure (i.e., preprocessing steps that reduce the dimension of each iteration) can improve the performance of iterative methods in multiple ways, such as, to speed up the convergence of the method by fighting inner correlations of the system, or to reduce the variance incurred by the presence of noise. In the current work, we show that sketching can also help us to get better theoretical guarantees for the projection-based methods. Specifically, we use good properties of  Gaussian sketching to prove an accelerated convergence rate of the sketched relaxation (also known as Motzkin’s) method. The new estimates hold for linear systems of arbitrary structure. We also provide numerical experiments in support of our theoretical analysis of the sketched relaxation method.
\end{abstract}

\section{Introduction}
We are interested in solving large-scale systems of linear equations 
\begin{equation}\label{main_system}
A x = b,
\end{equation} 
where $b \in \R^m$ and $A$ is an $m \times n$ matrix. We will consider the case when $m \ge n$, when iterative methods are typically employed.

One of the most popular iterative methods is the Kaczmarz method, being simple, efficient and well-adapted to large amounts of data. The original Kaczmarz method starts with some initial guess $x_0$, and then iteratively projects the previous approximation $x_k$ onto the solution space of the next equation in the system. Namely, if $a_1, \ldots, a_m \in \R^n$ are the rows of $A$, then the $k$-th step of the algorithm is
\begin{equation}\label{iter}
x_{k} = x_{k-1}+ \frac{b_i - \langle a_i, x_{k-1}\rangle}{\|a_i\|^2} a_i,
\end{equation}
where $b = (b_1, \ldots, b_n) \in \R^m$ is the right hand side of the system, $x_{k-1} \in \R^n$ is the approximation of a solution $x_*$ obtained in the previous step. 
To provide theoretical guarantees for the convergence of the method, Strohmer and Vershynin \cite{StrVer} proposed to choose the next row index $i$ at random with the probabilities weighted proportionally to the $L_2$ norms of the rows $a_i$. The authors have shown that this \emph{randomized Kaczmarz algorithm} is guaranteed to converge exponentially in expectation, namely,
\begin{equation}\label{standard_convergence_rate}
\E \|x_k - x_*\|^2_2 \le \left(1 - \frac{1}{\tilde\kappa(A)}\right)^k \|x_0 - x_*\|^2_2,
\end{equation}
where $\tilde\kappa(A) = \|A\|^2_F/s_{min}^2(A)$ is a condition number of the system and $x_*$ is the solution of the system \eqref{main_system}.

Here and further, $s_{min}(A)$ denotes the smallest singular value of the matrix $A$ and $\|A\|_F := trace(\sqrt{A^*A})$ (Frobenius, or Hilbert-Shmidt, norm of the matrix). We always assume that the matrix $A$ has full column rank, so that $s_{min}(A) > 0$.

However, the randomized scheme does not always choose the best equation on which to project. The approach by Agmon~\cite{Agm} and Motzkin and Schoenberg~\cite{MotSch}, 
resolves this issue. The idea is to always project onto the farthest equation from the current iteration point $x_k$, making the distance to the solution $\|x_{k+1} - x_*\|_2$ as good as possible by the next step. Precisely, the $k$-th step of the algorithm is defined by \eqref{iter}, where
\begin{equation}\label{condition}
 i := \argmax_{[j]} (A_j x_k - b_j)^2.
\end{equation}
This so-called Motzkin's method has also been known as the
Kaczmarz method with the “most violated constraint” or “maximal-residual” control (\cite{PetPop, NutSepLarSchKoeVir, Cen}).

It is intuitive to expect that Motzkin's method converges more efficiently than Kaczmarz \emph{per iteration} (although each iteration is clearly much more expensive, as one has to swipe over all $m$ equations to determine the best one to satisfy~\eqref{condition}) A step towards the theoretical guarantee of the convergence of Motzkin's method improved convergence rate was made in~\cite{HadNee}, where the authors have shown that 
\begin{equation}\label{accelerated_convergence_rate}
\E \|x_k - x_*\|^2_2 \le \prod_{i = 1}^{k-1}\left[1 - \frac{s_{min}^2(A)}{\gamma_i(A)}\right]^k \|x_0 - x_*\|^2_2,
\end{equation}
where $\gamma_i(A) := \|Ax_i - Ax_*\|_2^2/\|Ax_i - Ax_*\|_{\infty}^2 $ is a so-called \emph{dynamic range} of the system. 

In general, to get a good estimate on the dynamic range, one would need to know more about the structure of the matrix $A$. The authors also give a heuristic for $\gamma_i(S)$ in the case when the matrix $A$ has independent standard normal entries $\gamma_k \sim \|A\|_F^2n/\log(m-k)$ (\cite[Section~2.1]{HadNee}). So, it shows accelerated convergence for several initial iterations while $\log(m-k) > n$.

An independent branch of research improving Kaczmarz method was presented by Gower and Richtarik in \cite{GowRic}. The main idea of their \emph{sketch-and-project} framework is the following. One can observe that a random selection of a row can be represented as a \emph{sketch}, that is, left multiplication by a random vector (with exactly one non-zero entry). Then, the iteration~\eqref{iter} is a projection onto the image of the sketch.  A natural way to extend the method is to pre-process every iteration by left multiplication with an $s \times m$  matrix $S$ (or $1 \times m$ vector) taken from some distribution, thereby reducing the dimension of the problem (see, e.g., \cite{GowRic, loizou2017momentum, needell2019block}). 

\section{Proposed method}
One could also use sketching to pre-process each iteration of Motzkin's method. Like in the original sketch-and-project framework, the idea is to reduce the dimension of the problem (in this case the space of search for the best equation in the sense of~\eqref{condition}), hopefully, preserving the main properties of the system. 

Again, the $k$-th step of the algorithm is defined by \eqref{iter}, but we would choose the best row of the sketched matrix, namely.
\begin{equation}\label{sketched condition}
i = argmax_{[i]} (S^TA)_i x_k - (S^Tb)_i)^2
\end{equation}

\subsection{Block Sketched Motzkin (SKM)}
A natural choice of the sketch is a \emph{block sketch} (analogous to the one considered in \cite{NeeTro} for the Kaczmarz algorithm): ${m \times s}$ sketches 
$$S = (\text{zeros}(s, \text{shift}), I(s, s), \text{zeros}(s, m - \text{shift} - s))^T,$$ 
where zeros() denotes a matrix of all zeros, $I()$ the identity, and $s$ is the block size and $\text{shift}\,=\,sz$, $z \in \{1, 2, ..., \lfloor m/s\rfloor\}$ is selected randomly at each step. 

Then, application of the sketch is exactly equivalent to selecting an $s\times n$ row submatrix of $A$ and proceeding with the Motzkin's scheme inside the selected submatrix. This method is equivalent to the SKM method studied in the more general case of inequalities in \cite{DelHadNee}. To the best of our knowledge, there are still no theoretical guarantees for the convergence of SKM (at least, unspecified to some model of the matrix $A$, such as a Gaussian model).

\subsection{Gaussian Sketched Motzkin (GSM)}

We propose to draw sketch matrices $S$ from the Gaussian distribution on each step ($m \times s$ matrices with i.i.d. $N(0,1)$ entries independent between each other). In this framework, we can provide theoretical guarantees for the exponential convergence rate, $log$-accelerated with respect to the standard rate ~\eqref{standard_convergence_rate}, without any assumptions about the model of the matrix $A$.
\begin{theorem}\label{gauss_motzkin}
Suppose $A$ is a $m \times n$ matrix with full column rank ($m \ge n$) and let $x_*$ be a solution of the system $Ax = b$. For any initial estimate $x_0$, the GSM method produces a sequence $\{x_k, k \ge 0\}$ of iterates that satisfy
\begin{align}\label{c_rate_motzkin}
\E&\|x_{k} - x_*\|_2^2 \le \left( 1 - c \frac{\log s}{\tilde\kappa(A)}\right)^k \|x_0- x_*\|_2^2.
\end{align}
Here, $c > 0$ is an absolute constant.
\end{theorem}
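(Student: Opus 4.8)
The plan is to condition on the iterate $x_{k-1}$ and the sketch $S$ at step $k$, and track the one-step decrease of $\|x_k - x_*\|_2^2$. Since the sketched Motzkin step is exactly an orthogonal projection of $x_{k-1}$ onto the hyperplane $\{x : \langle a_i, x\rangle = b_i\}$ for the index $i$ selected by \eqref{sketched condition}, the standard Kaczmarz identity gives
\[
\|x_k - x_*\|_2^2 = \|x_{k-1} - x_*\|_2^2 - \frac{(\langle a_i, x_{k-1}\rangle - b_i)^2}{\|a_i\|_2^2}.
\]

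Here the row $a_i$ is one of the original rows of $A$, because the Gaussian sketch is used only to \emph{select} which equation of the true system to project onto (the GSM iteration still uses \eqref{iter} with an original row $a_i$), so no bias is introduced and the term subtracted is a genuine residual of the original system. Writing $r = Ax_{k-1} - b = A(x_{k-1} - x_*)$ for the residual vector, the selected index is $i = \argmax_j \big| (S^\tran r)_j \big|$ up to the row-norm normalization; the key point is that the quantity we subtract is at least $\frac{1}{\max_j \|a_j\|_2^2}\, r_i^2$ for that $i$, and we want to show $\E_S[r_i^2]$ is comparably large, specifically $\gtrsim \frac{\log s}{m}\, \|r\|_2^2$ — this would combine with $\|r\|_2^2 = \|A(x_{k-1}-x_*)\|_2^2 \ge s_{\min}^2(A)\|x_{k-1}-x_*\|_2^2$ and $\max_j\|a_j\|_2^2 \le \|A\|_F^2$ to yield the claimed factor $1 - c\,\frac{\log s}{\tilde\kappa(A)}$ after taking expectations and iterating.

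So the heart of the argument is the following Gaussian fact: for a fixed vector $r \in \R^m$ and an $m\times s$ matrix $S$ with i.i.d.\ $N(0,1)$ entries, the column $g := S^\tran r$ has i.i.d.\ $N(0, \|r\|_2^2)$ coordinates, so $\E\big[\max_{1\le j\le s} g_j^2\big] \ge c\,(\log s)\,\|r\|_2^2$ by the standard lower bound on the expected maximum of $s$ i.i.d.\ Gaussians. The subtle step is that the Motzkin selection picks the index $i$ maximizing $|(S^\tran A)_i x_{k-1} - (S^\tran b)_i| / \|(S^\tran A)_i\|$ — i.e.\ normalized by the sketched row norms — rather than the index maximizing $|g_i|$ itself, and then projects onto the \emph{original} row $a_i$. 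I would handle this by showing that the normalization factors $\|(S^\tran A)_i\|$ concentrate around a common value $\sqrt{s}\,\|A\|_F/\text{(something)}$ (again by Gaussian concentration, since each $(S^\tran A)_i$ is a Gaussian vector in $\R^n$), so that the selected $i$ still makes $|g_i| = |(S^\tran r)_i|$ within a constant factor of $\max_j |g_j|$, up to a lower-order error that can be absorbed into the constant $c$. This is the step I expect to be the main obstacle: controlling the interaction between the random selection rule and the random normalization, uniformly over $r$, without losing the $\log s$ gain. Once the one-step expected contraction $\E\big[\|x_k-x_*\|_2^2 \mid x_{k-1}\big] \le \big(1 - c\,\frac{\log s}{\tilde\kappa(A)}\big)\|x_{k-1}-x_*\|_2^2$ is established, the theorem follows by the tower property and induction on $k$.
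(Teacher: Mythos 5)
Your outer structure (condition on the previous iterate, prove a one-step expected contraction, then iterate via the tower property) and your key Gaussian ingredient ($\E\max_{j\le s}|\langle S_j,r\rangle|\ge c\sqrt{\log s}\,\|r\|_2$ for the residual $r=A(x_{k-1}-x_*)$) are both the right ones and match the paper. But the proposal goes off track at the very first identity because it misidentifies what a GSM step is. In GSM the index $i$ in \eqref{sketched condition} ranges over the $s$ rows of the \emph{sketched} matrix $S^TA$, each of which is a dense Gaussian linear combination of all $m$ rows of $A$; there is no original row $a_i$ associated with the selected $i$, so ``project onto the original row $a_i$'' is not defined for this method, and your Kaczmarz identity subtracting $(\langle a_i,x_{k-1}\rangle-b_i)^2/\|a_i\|_2^2$ is not the correct one-step identity. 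The iteration projects onto the sketched hyperplane $\{x:(S^TA)_i x=(S^Tb)_i\}$, which still contains $x_*$ because $Ax_*=b$ implies $S^TAx_*=S^Tb$, so the correct identity is
\begin{align*}
\|x_k-x_*\|_2^2=\|x_{k-1}-x_*\|_2^2-\frac{\|S^Tb-S^TAx_{k-1}\|_\infty^2}{\|(S^TA)_i\|_2^2}.
\end{align*}

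This misreading then forces you into the central estimate $\E_S[r_i^2]\gtrsim\frac{\log s}{m}\|r\|_2^2$ for a single coordinate of the \emph{original} residual, which is both unnecessary and, I believe, unobtainable: the event that $\max_j|\langle S_j,r\rangle|$ is large carries no information about any individual coordinate of $r$ being large, and in any case the selected $i$ does not index a coordinate of $r$. The paper never needs such a statement; it bounds the numerator directly via $(\E_S\|S^Tr\|_\infty)^2\ge c\log s\,\|r\|_2^2$ and handles the random denominator in expectation, using Jensen's inequality for the jointly convex function $(x,y)\mapsto x^2/y$ together with $\E\|(S^TA)_i\|_2^2=\|A\|_F^2$; combining with $\|r\|_2^2\ge s_{min}^2(A)\|x_{k-1}-x_*\|_2^2$ gives the contraction factor $1-c\log s/\tilde\kappa(A)$. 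Your proposed workaround --- proving concentration of the norms $\|(S^TA)_i\|_2$ to decouple the selection rule from the normalization --- addresses a non-issue (the selection rule \eqref{sketched condition} is unnormalized) while leaving the real gap, namely the identification of the projection target, unaddressed.
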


\subsection{Sparse Gaussian sketched Motzkin (sGSM)}
Comparing to SKM, the main disadvantage of GSM is the cost of each sketch: instead of drawing a sub-block of the original matrix $A$, one needs to pre-multiply $A$ by a dense Gaussian matrix. As a trade-off between these two methods, we also propose the \emph{sparse GSM} (or, sGSM) method, taking sketches
\begin{equation}\label{sparse_sample}
S := (\text{zeros}(s, \text{shift}), X, \text{zeros}(s, m - \text{shift} - s))^T,
\end{equation}
where $X$ is an $s \times s$ matrix with i.i.d. $N(0,1)$ entries. At each step, we generate a sketch matrix $S$ independently from the previous steps. This method is equivalent to selecting a $s \times n$ row sub-block of $A$ and then sketching only that sub-block by a Gaussian matrix $X$.  Then, one can either choose the position of a sub-block to be sketched randomly on each step, or fix a specific well-conditioned sub-block of $A$ and just resample the matrix $X$ from the Gaussian distribution. 

We can also provide theoretical guarantees for the convergence of the sGSM method.
\begin{theorem}\label{sparse_gauss_motzkin}
Suppose $A$ is a $m \times n$ matrix with full column rank ($m \ge n$) and let $x_*$ be a solution of the system $Ax = b$. For any initial estimate $x_0$, the sGSM method produces a sequence $\{x_k, k \ge 0\}$ of iterates that satisfy
\begin{align}\label{c_rate_sGSM}
\E&\|x_{k} - x_*\|_2^2 \le \left( 1 - c \frac{\log s}{\tilde\kappa(A')}\right)^k \|x_0- x_*\|_2^2.
\end{align}
Here, $c > 0$ is an absolute constant and $A'$ has maximal $\tilde\kappa(A')$ over all  $s \times n$ row submatrices of $A$.
\end{theorem}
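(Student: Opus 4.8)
The plan is to reduce the sGSM analysis to the GSM analysis of Theorem \ref{gauss_motzkin} applied inside a fixed row sub-block. Fix one step, and condition on the choice of the $s \times m$ selection block, i.e., on which $s$ consecutive rows of $A$ are retained by the zero-padding in \eqref{sparse_sample}; call the resulting $s \times n$ submatrix $A_z$ (with corresponding right-hand side $b_z$). Conditioned on this choice, applying the sketch $S$ from \eqref{sparse_sample} and then running the Motzkin selection rule \eqref{sketched condition} is \emph{exactly} one step of the GSM method run on the smaller system $A_z x = b_z$, with an $s \times s$ Gaussian matrix $X$ playing the role of the Gaussian sketch. The first step, then, is to make this equivalence precise: since $x_*$ solves $Ax=b$ it also solves $A_z x = b_z$, and the Kaczmarz update \eqref{iter} is unaffected by which selection block we picked, so the per-step contraction bound from the proof of Theorem \ref{gauss_motzkin} gives
\begin{equation}\label{sGSM_onestep}
\E\big[ \|x_{k} - x_*\|_2^2 \,\big|\, x_{k-1}, \text{block } z \big] \le \left( 1 - c\,\frac{\log s}{\tilde\kappa(A_z)} \right) \|x_{k-1} - x_*\|_2^2,
\end{equation}
where the expectation is over the Gaussian matrix $X$ only. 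Here one must check that the GSM argument is genuinely oblivious to the structure of the underlying matrix — which is precisely the content of the ``no assumptions on $A$'' claim in Theorem \ref{gauss_motzkin}, so the same bound applies verbatim with $A$ replaced by $A_z$ and $s$ unchanged (the sketch is still $s \times s$ Gaussian). A technical point to verify is that $A_z$ must have full column rank for $\tilde\kappa(A_z)$ to be finite; this is automatically part of the hypothesis since we restrict to sub-blocks that are well-conditioned, and in the randomized-block variant we simply require $m \ge n$ together with the selection being over blocks that all have full column rank (or, more cleanly, we may absorb a bad block into the bound by noting it is never selected when conditioning appropriately — I would state the hypothesis to avoid this).

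Next, I would pass from the conditional bound to the unconditional one. In the ``fixed sub-block'' variant of sGSM there is nothing to do: $z$ is deterministic, $A_z = A'$ by definition of $A'$ as the worst-case block, and \eqref{sGSM_onestep} already is the claimed one-step contraction with $\tilde\kappa(A')$. In the ``random sub-block'' variant, I take expectation over $z$ as well: since $\tilde\kappa(A_z) \le \tilde\kappa(A')$ for every admissible block $z$ by the definition of $A'$, we have $1 - c\log s/\tilde\kappa(A_z) \le 1 - c\log s/\tilde\kappa(A')$ pointwise in $z$, so averaging \eqref{sGSM_onestep} over the (independent) choice of $z$ preserves the inequality and yields
\begin{equation}\label{sGSM_combined}
\E\big[ \|x_k - x_*\|_2^2 \,\big|\, x_{k-1} \big] \le \left( 1 - c\,\frac{\log s}{\tilde\kappa(A')} \right) \|x_{k-1} - x_*\|_2^2.
\end{equation}
Finally, because each step's sketch $(X,z)$ is drawn independently of all previous steps, I iterate \eqref{sGSM_combined} via the tower property: taking total expectation and inducting on $k$ gives $\E\|x_k - x_*\|_2^2 \le (1 - c\log s/\tilde\kappa(A'))^k \|x_0 - x_*\|_2^2$, which is \eqref{c_rate_sGSM}. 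The constant $c$ is the same absolute constant as in Theorem \ref{gauss_motzkin}.

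The main obstacle, and the only place where real work hides, is establishing the conditional per-step bound \eqref{sGSM_onestep} — but this is \emph{not new work} here: it is exactly Theorem \ref{gauss_motzkin} applied to $A_z$, so the burden is only to confirm that nothing in the GSM proof used $m \ge n$ in a way that breaks when $m$ is replaced by the block size $s$ (we need $s \ge n$, or at least $s \ge \operatorname{rank}$, for the relevant submatrix to determine $x_*$), and that the logarithmic acceleration factor $\log s$ comes from the \emph{number of sketched rows} $s$ rather than from the ambient dimension $m$. Granting those checks, the reduction is essentially bookkeeping: conditioning, monotonicity of $t \mapsto 1 - c\log s/t$ composed with $\tilde\kappa(\cdot) \le \tilde\kappa(A')$, and the tower property. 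I would therefore present the proof as: (i) state the conditional equivalence to GSM on $A_z$; (ii) invoke Theorem \ref{gauss_motzkin} to get \eqref{sGSM_onestep}; (iii) bound $\tilde\kappa(A_z) \le \tilde\kappa(A')$ and average over $z$; (iv) iterate.
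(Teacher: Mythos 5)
Your proposal is correct and follows essentially the same route as the paper: the paper likewise observes that $S^TA = X^TA'$ reduces each sGSM step to a GSM step (Proposition~\ref{main_motzkin}) on the sub-block $A'$ with the $s\times s$ Gaussian $X$, bounds by the worst-conditioned block, and iterates as in Theorem~\ref{gauss_motzkin}. Your explicit conditioning on the block index $z$ and the remark that the bound is only meaningful when the sub-blocks have full column rank (i.e., $s \ge n$, else $\tilde\kappa(A')=\infty$ and the estimate is vacuous) are just more careful renderings of steps the paper leaves implicit.
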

\begin{remark} If we know the condition number $\tilde\kappa'$ of a $s \times n$ row block that we use in iterations, Theorem~\ref{c_rate_sGSM} holds with $\tilde\kappa'$ in place of the worst case $\tilde\kappa(A')$. There is an abundant literature on existence and construction of well-conditioned sub-blocks for any matrix (see~\cite{tropp2009column, vershynin2006random, tzafriri1991problem} as well as the discussion in \cite{NeeTro}). Also, in many standard cases a random selection of a sub-block produces a well-conditioned submatrix (see~\cite{tropp2008norms, chretien2012invertibility}). For our experiments in Section~\ref{experiments} we split the matrix into $\lfloor m/s\rfloor$ blocks of the same size and every time pick a sub-block randomly with replacement.
\end{remark}
\subsection{Proof of the theoretical results}\label{proof_with_error}

\begin{proposition}\label{main_motzkin} Suppose $A$ is a $m \times n$ matrix with full column rank ($m \ge n$) and let $x_*$ be a solution of the system $Ax = b$.  Let $x_k$ be a fixed vector in $\R^n$. Let $S$ be $m \times s$ matrix with i.i.d. standard normal entries and $x_{k+1}$ is obtained by iteration \eqref{iter} with the index $i$ chosen by the rule~\eqref{sketched condition}. Then, 
\begin{align*}
\E_S\|x_{k+1} - &x_*\|_2^2 \le (1 - c\frac{\log s \cdot \sigma^2_{min}(A)}{\|A\|_F^2})\|x_k - x_*\|_2^2 .
\end{align*}
\end{proposition}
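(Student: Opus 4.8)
The plan is to prove the stated one-step contraction; write $r:=x_k-x_*$ and assume $r\neq 0$, the case $r=0$ being trivial. Since $S^\tran A x_*=S^\tran b$, the point $x_*$ lies on every hyperplane of the sketched system $(S^\tran A)x=S^\tran b$, so one Motzkin step is the orthogonal projection of $x_k$ onto a hyperplane through $x_*$; by the Pythagorean theorem,
\[
\|x_{k+1}-x_*\|_2^2 \;=\; \|r\|_2^2 \;-\; \frac{\langle \tilde a_{i^*},\,r\rangle^2}{\|\tilde a_{i^*}\|_2^2},
\]
where $\tilde a_i$ is the $i$-th row of $S^\tran A$ and, by \eqref{sketched condition}, $i^*=\argmax_{i\le s}\langle \tilde a_i,r\rangle^2$. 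Writing $g_1,\dots,g_s\in\R^m$ for the columns of $S$ (i.i.d.\ $N(0,I_m)$), one has $\tilde a_i=A^\tran g_i$, so the sketched system has i.i.d.\ rows $v_i:=A^\tran g_i\sim N(0,A^\tran A)$, and it remains to show $\E_S\big[\langle v_{i^*},r\rangle^2/\|v_{i^*}\|_2^2\big]\ge c\,(\log s)\,s_{min}^2(A)\,\|r\|_2^2/\|A\|_F^2$.

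\emph{Decoupling by an orthogonal splitting.} Let $\hat w:=Ar/\|Ar\|_2$ and split $g_i=t_i\hat w+g_i^\perp$ with $t_i:=\langle g_i,\hat w\rangle$ i.i.d.\ $N(0,1)$ and $g_i^\perp$ an independent standard Gaussian on $\hat w^\perp$. Then $\langle v_i,r\rangle^2=\langle g_i,Ar\rangle^2=\|Ar\|_2^2\,t_i^2$, so the chosen index $i^*$ depends only on $(t_1,\dots,t_s)$ and the numerator equals $\|Ar\|_2^2\max_{i\le s}t_i^2$. For this maximum I would use the elementary bound $\mathbb{P}\{\max_{i\le s}t_i^2\ge\log s\}\ge\tfrac{1}{2}$, valid for $s$ above an absolute constant; for smaller $s$, $\log s$ is $O(1)$ and the asserted rate follows from the unaccelerated estimate obtained by the same argument with $\log s$ replaced by a constant.

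\emph{The denominator and conclusion.} Conditioning on $(t_1,\dots,t_s)$ fixes $i^*$ and the value $t_{i^*}$, while $g_{i^*}^\perp$ remains a fresh standard Gaussian on $\hat w^\perp$; hence $v_{i^*}=t_{i^*}A^\tran\hat w+A^\tran g_{i^*}^\perp$, and Jensen's inequality applied to $y\mapsto 1/y$ yields
\[
\E_S\!\Big[\tfrac{1}{\|v_{i^*}\|_2^2}\,\Big|\,(t_i)\Big]\;\ge\;\frac{1}{\E_S[\|v_{i^*}\|_2^2\mid(t_i)]}\;=\;\frac{1}{\,t_{i^*}^2\|A^\tran\hat w\|_2^2+\|A\|_F^2-\|A^\tran\hat w\|_2^2\,},
\]
using $\E\|A^\tran g_{i^*}^\perp\|_2^2=\operatorname{trace}\!\big(A^\tran A(I-\hat w\hat w^\tran)\big)=\|A\|_F^2-\|A^\tran\hat w\|_2^2$ and that the cross term has zero conditional mean. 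Since $\|A^\tran\hat w\|_2^2=\|A^\tran Ar\|_2^2/\|Ar\|_2^2\le s_{max}^2(A)\le\|A\|_F^2$, the map $t\mapsto t/(t\|A^\tran\hat w\|_2^2+\|A\|_F^2-\|A^\tran\hat w\|_2^2)$ is nondecreasing; combining this with the numerator bound on the event $\{\max_i t_i^2\ge\log s\}$ gives
\[
\E_S\!\left[\frac{\langle v_{i^*},r\rangle^2}{\|v_{i^*}\|_2^2}\right]\;\ge\;\frac{\|Ar\|_2^2}{2}\cdot\frac{\log s}{\,\|A^\tran\hat w\|_2^2\,\log s+\|A\|_F^2\,}\;\ge\;c\,\frac{(\log s)\,s_{min}^2(A)}{\|A\|_F^2}\,\|r\|_2^2,
\]
where the last step uses $\|Ar\|_2^2\ge s_{min}^2(A)\|r\|_2^2$ together with the observation that $\|A^\tran\hat w\|_2^2\,\log s$ can dominate $\|A\|_F^2$ only when $\hat w=Ar/\|Ar\|_2$ is nearly aligned with the top left singular direction of $A$, in which case $\|Ar\|_2^2$ is itself of order $s_{max}^2(A)\|r\|_2^2$. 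Substituting into the identity of the first step gives the proposition.

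\emph{Main obstacle.} The delicate point is this coupling in the denominator: the row $v_{i^*}$ selected by the maximal-residual rule is biased toward having a large component along $A^\tran\hat w$, so $\|v_{i^*}\|_2^2$ overshoots its typical value $\|A\|_F^2$ by roughly $(\log s)\,\|A^\tran\hat w\|_2^2$. The orthogonal splitting together with Jensen isolates precisely this excess; controlling it by $s_{max}^2(A)$ while exploiting the compensating largeness of $\|Ar\|_2^2$ in the offending configuration is the step requiring the most care, and any cruder treatment of $\|v_{i^*}\|_2^2$ collapses the estimate back to the unaccelerated rate $1-1/\tilde\kappa(A)$.
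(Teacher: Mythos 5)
Your proposal follows the same skeleton as the paper's proof: the Pythagorean identity for the projection step, a lower bound on the numerator via the maximum of $s$ i.i.d.\ Gaussians $\langle g_i, Ar\rangle$, and Jensen's inequality to handle the random denominator $\|(S^\tran A)_{i^*}\|_2^2$. Where you differ is that you treat the denominator honestly: you observe that the selected row $v_{i^*}=t_{i^*}A^\tran\hat w+A^\tran g_{i^*}^\perp$ is biased, so that $\E[\|v_{i^*}\|_2^2\mid(t_i)]=t_{i^*}^2\|A^\tran\hat w\|_2^2+\|A\|_F^2-\|A^\tran\hat w\|_2^2$, which exceeds $\|A\|_F^2$ by roughly $(\log s)\|A^\tran\hat w\|_2^2$. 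The paper instead computes $\E\|(S^\tran A)_i\|_2^2=\|A\|_F^2$ by treating $i$ as a fixed index, which silently discards exactly the selection bias you isolate. So your extra care is not pedantry; it is addressing a step the paper's argument does not actually justify.

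The genuine gap is your last displayed inequality. Writing $\beta:=\|A^\tran\hat w\|_2^2$, you need $\frac{\|Ar\|_2^2}{2}\cdot\frac{\log s}{\beta\log s+\|A\|_F^2}\ge c\,\frac{(\log s)\,s_{min}^2(A)}{\|A\|_F^2}\|r\|_2^2$, and when $\beta\log s>\|A\|_F^2$ this reduces to $\frac{\|Ar\|_2^2}{4\beta}\ge c\,\frac{(\log s)\,s_{min}^2(A)}{\|A\|_F^2}\|r\|_2^2$. Your heuristic that ``$\beta\log s$ dominates only when $\hat w$ is aligned with the top singular direction, in which case $\|Ar\|_2^2\sim s_{max}^2\|r\|_2^2$'' fails for mixed directions. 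Take singular values $M,1,\dots,1$ and $Ar\propto(u_1+u_n)/\sqrt2$: then $\beta\approx M^2/2$, $\|Ar\|_2^2\approx 2\|r\|_2^2$, $s_{min}=1$, $\|A\|_F^2=M^2+n-1$, so the left side is about $\|r\|_2^2/M^2$ while the right side is about $c(\log s)\|r\|_2^2/M^2$ when $M^2\gg n$; the inequality fails once $\log s\gtrsim 1/c$. Moreover, in this configuration your intermediate estimates are essentially tight (the numerator really is $\approx 2\log s\,\|Ar\|_2^2$ and the denominator really concentrates near $\beta\,t_{i^*}^2+\|A\|_F^2-\beta$), so this is not slack you can recover by a sharper argument: the one-step gain is genuinely of order $\|Ar\|_2^2/\beta$ rather than $(\log s)\,s_{min}^2\|r\|_2^2/\|A\|_F^2$ in the regime $(\log s)\,s_{max}^2(A)\gtrsim\|A\|_F^2$. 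The conclusion of the Proposition is therefore only attainable under a restriction such as $\log s\lesssim\|A\|_F^2/s_{max}^2(A)$ (the stable rank), or with $\log s$ replaced by $\min\{\log s,\ \|A\|_F^2/s_{max}^2(A)\}$; your decomposition makes this visible, whereas the paper's shortcut in computing $\E\|(S^\tran A)_i\|_2^2$ hides it.
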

\begin{proof}
Using that $x_{k+1} - x_*$ is orthogonal to $(S^TA)_i$, where $i$ is the index chosen for the $k$-th iteration, we have the following by Pythagorean theorem:
\begin{align*}
\|x_{k+1} - x_*\|_2^2 = \|x_k - x\|_2^2 - \frac{\|(S^Tb)_i - (S^TA)_i x_k\|_2^2}{\|(S^TA)_i\|_2^2} \\
=  \|x_k - x\|_2^2 - \frac{\|S^Tb - S^TA x_k\|_{\infty}^2}{\|(S^TA)_i\|_2^2}.
\end{align*}
since $i = \argmax ((S^Tb)_i - (S^TA)_ix_k)^2$.

Now, since we consider the previous iteration fixed (so, $x_k$ and $i$ are fixed),
$$
\E_S \|x_{k+1} - x_*\|_2^2 =\|x_k - x\|_2^2 - \E \frac{\|S^Tb - S^TA x_k\|_{\infty}^2}{\|(S^TA)_i\|_2^2}.
$$
Using that the function $(x, y) \mapsto x^2/y$ is convex on the positive orthant, by Jensen's inequality, we can estimate
\begin{align*}
\E \frac{\|S^Tb - S^TA x_k\|_{\infty}^2}{\|(S^TA)_i\|_2^2} \ge \frac{(\E_S\|S^Tb - S^TA x_k\|_{\infty})^2}{\E (\|(S^TA)_i\|_2^2)} \\
=  \frac{(\E_S\|S^TA ( x_* - x_k)\|_{\infty})^2}{\|A\|_F^2},
\end{align*}
where in the denominator we computed
$$
\E\|(S^TA)_i\|_2^2 = \sum_{k =1}^n \E (\sum_{j = 1}^m S_{ij} A_{jk})^2 = \sum_{kj} A_{jk}^2 = \|A\|^2_F.
$$
Moreover, from the estimate for the maximum of independent normal random variables,
$$
\E_S\|S^TA ( x_* - x_k)\|_{\infty} = \E \max_{i \in [s]} \langle S_i, v \rangle \ge c \|v\|_2 \sqrt{\log s},
$$
where $v = A(x - x_k)$, then
\begin{align*}
\E_S \|x_{k+1} - x_*\|_2^2 \le \|x_k - x\|_2^2 - \frac{c \|v\|_2^2 \log{s}}{\|A\|_F^2} \\
\le \|x_k - x\|_2^2 - \frac{c \sigma_{min}^2(A) \log{s}}{\|A\|_F^2} \|x_k - x\|_2^2.
\end{align*}
This concludes the proof of the Proposition~\ref{main_motzkin}.
\end{proof}

{\bfseries Proof of Theorem~\ref{gauss_motzkin}.} We have
\begin{align*}
\E\|x_{k} - &x_*\|_2^2  = \E_{S_1} \E_{S_2}  \ldots \E_{S_k} \|x_{k} - x_*\|_2^2 \\
&\le \left[ 1 -  c\frac{\log s \cdot \sigma^2_{min}(A)}{\|A\|_F^2}\right]^k \|x_0- x_*\|_2^2.
\end{align*}
Here, $\E_{S_1}, \ldots, \E_{S_k}$ refer to the randomness of choosing a matrix $S_i$ (independent from all Gaussian sketches $S_1, \ldots, S_{i-1}$ that were used during previous steps). The last inequality is guaranteed by Proposition~\ref{main_motzkin}. This concludes the proof of Theorem~\ref{gauss_motzkin}.

{\bfseries Proof of Theorem~\ref{sparse_gauss_motzkin}.} Note that for the sparse Gaussian sketch defined by~\eqref{sparse_sample}, $S^TA = X^TA'$, where $A'$ is an $s \times n$ row sub-block of $A$.  Proposition~\ref{main_motzkin} holds for $A = A'$ and $S =X$. Then, the proof is concluded along the lines of the proof of Theorem~\ref{gauss_motzkin}.

\section{Experimental data and results}\label{experiments}
In this section, we present some numerical experiments on the performance of the Sketched Motzkin methods. Everything was coded and run in MATLAB R2019a, on a 1.6GHz dual-core Intel Core i5, 8 GB 2133 MHz. 
Time is always measured in seconds. We generate the solution of a system as a random vector (and define the left hand side $b$ accordingly), so we do not need to worry about the case when $\|x_*\|_2 = 0$. We use $x_0 = 0$ as an initial point.

The first two figures show per iteration performance on two main models of the matrix $A$: an incoherent Gaussian matrix with i.i.d. $N(0,1)$ entries and a coherent matrix with almost collinear rows, $A_{ij}\sim$ Unif$[0.8,1]$. 

As expected, Gaussian sketching is as good as block sketching (SKM) when the rows of $A$ are Gaussian themselves, so SKM, GSM and sGSM show the same performance on the Gaussian model (Figure~\ref{iter_gaus}). Also not surprisingly, choosing the most violated equation (Motzkin) shows the best per iteration convergence progress, whereas taking the next equation at random (Kaczmarz) is the weakest. 

\begin{figure}
\includegraphics[width=0.9\columnwidth]{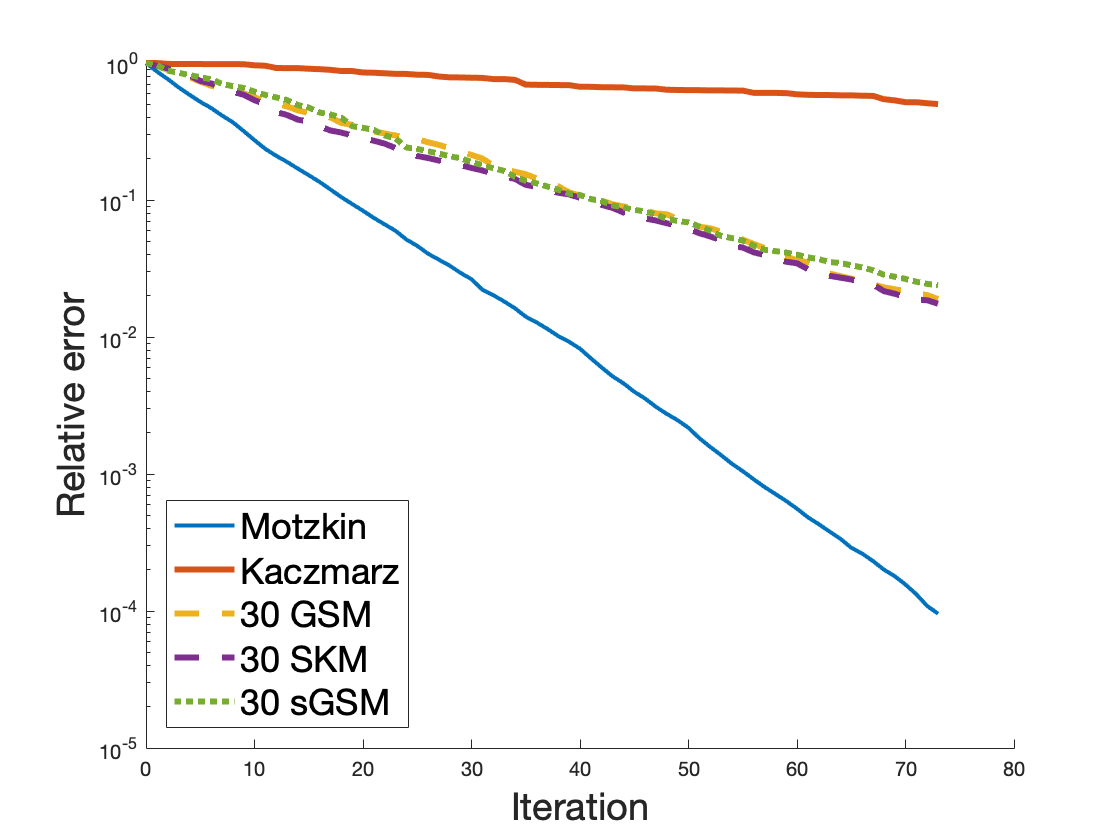} 
\caption{Performance of the methods on $A = 5000 \times 100$ matrix with  i.i.d. $N(0,1)$ entries}
\label{iter_gaus}
\end{figure}

We can see in Figure~\ref{iter_unif} that Gaussian sketching significantly improves the performance of Motzkin's method on the coherent model, and that lighter sparse sketches (sGSM) work as well as the dense GSM in practice.

\begin{figure}
\includegraphics[width=0.9\columnwidth]{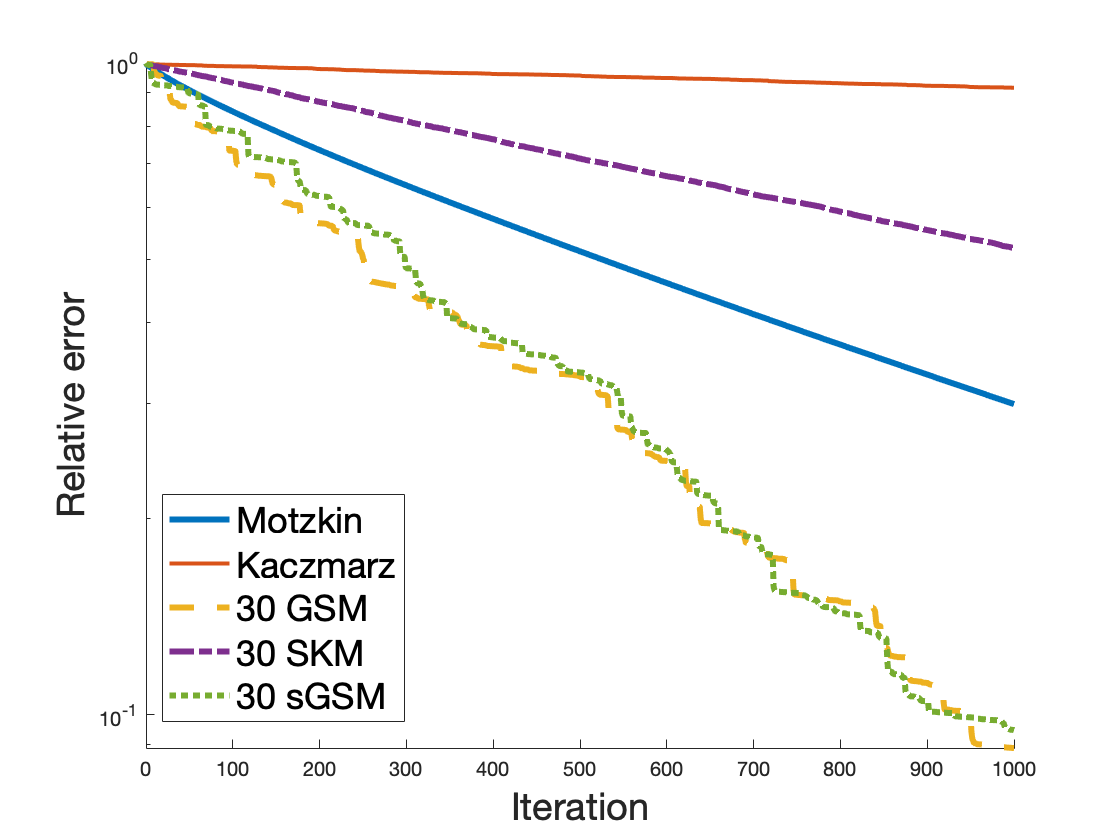}
\caption{Performance of the methods on $A = 5000 \times 100$ matrix with  i.i.d. $Unif[0.8.,1]$ entries}
\label{iter_unif}
\end{figure}

In Figure~\ref{time_unif}, we can see that this per iteration advantage is still not enough for Gaussian sketched methods to show the best performance in time. Still, it is practical to use sketching: the SKM method gives the best convergence in time. However, with the use of distributed computations or fast matrix multiplication one potentially could speed up the application of the Gaussian sketches, making GSM and sGSM methods more efficient. We can see in Figure~\ref{time_unif} the advantage of using sparse Gaussian sketches rather than dense GSM.

\begin{figure}
\includegraphics[width=0.9\columnwidth]{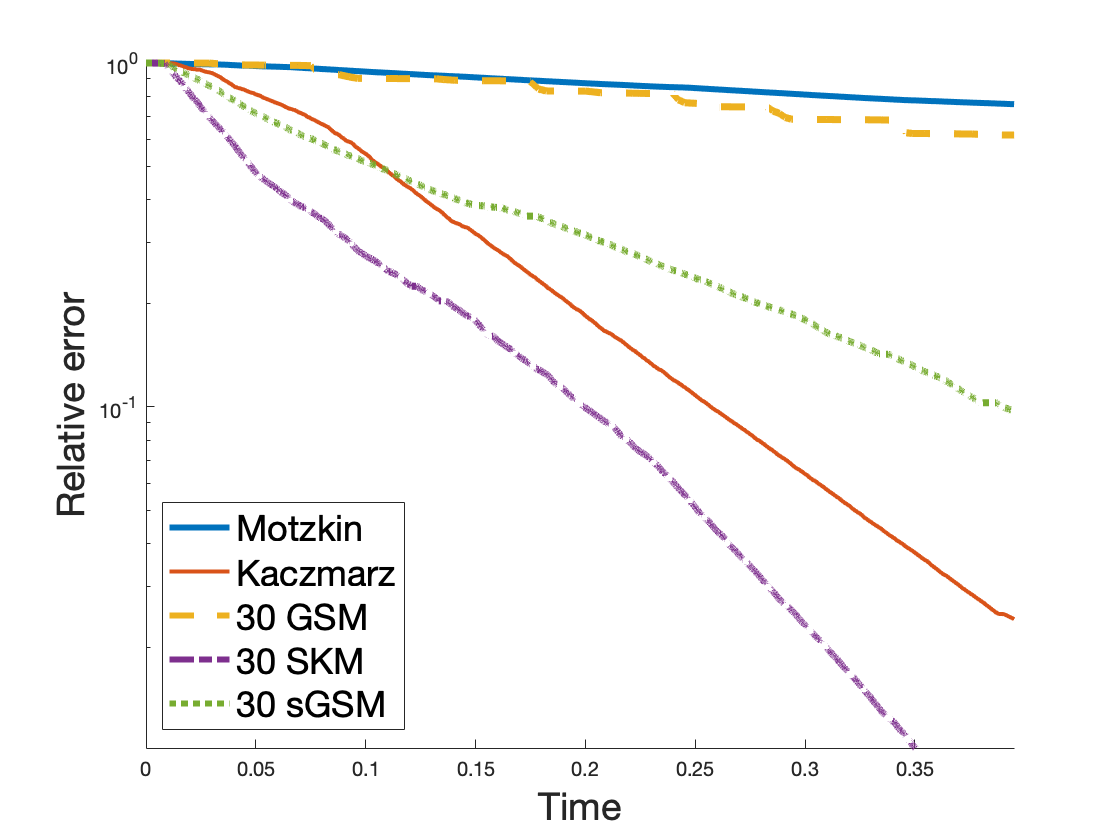} 
\caption{Performance in time on $A = 5000 \times 100$ matrix with  i.i.d. $Unif[0.8.,1]$ entries }
\label{time_unif}
\end{figure}

We also compare the preformance of the methods discussed above on two real world datasets, GAS and COVTYPE, both taken from the UCI repository \cite{Dua:2019}.

\begin{figure}
\includegraphics[width=0.9\columnwidth]{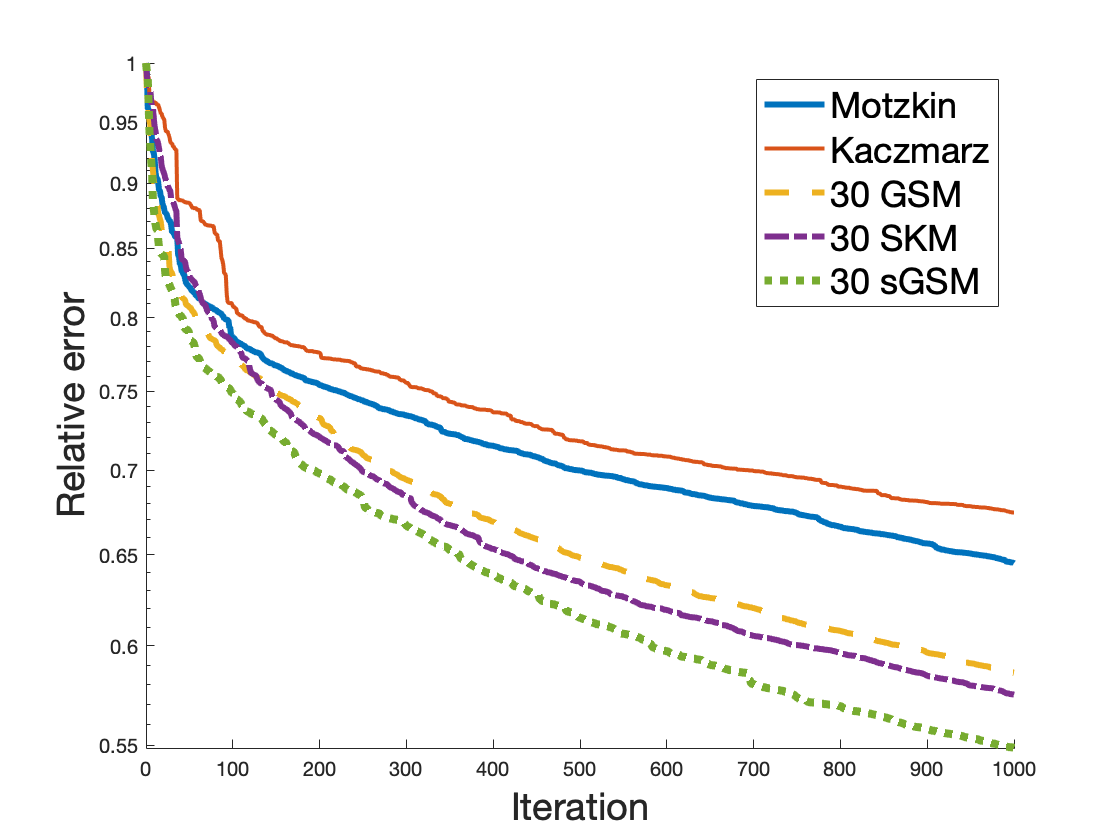} 
\caption{Performance of the methods on a real-world dataset, GAS ($1000 \times 128$)}
\label{iter_gas}
\end{figure}
\begin{figure}
\includegraphics[width=0.9\columnwidth]{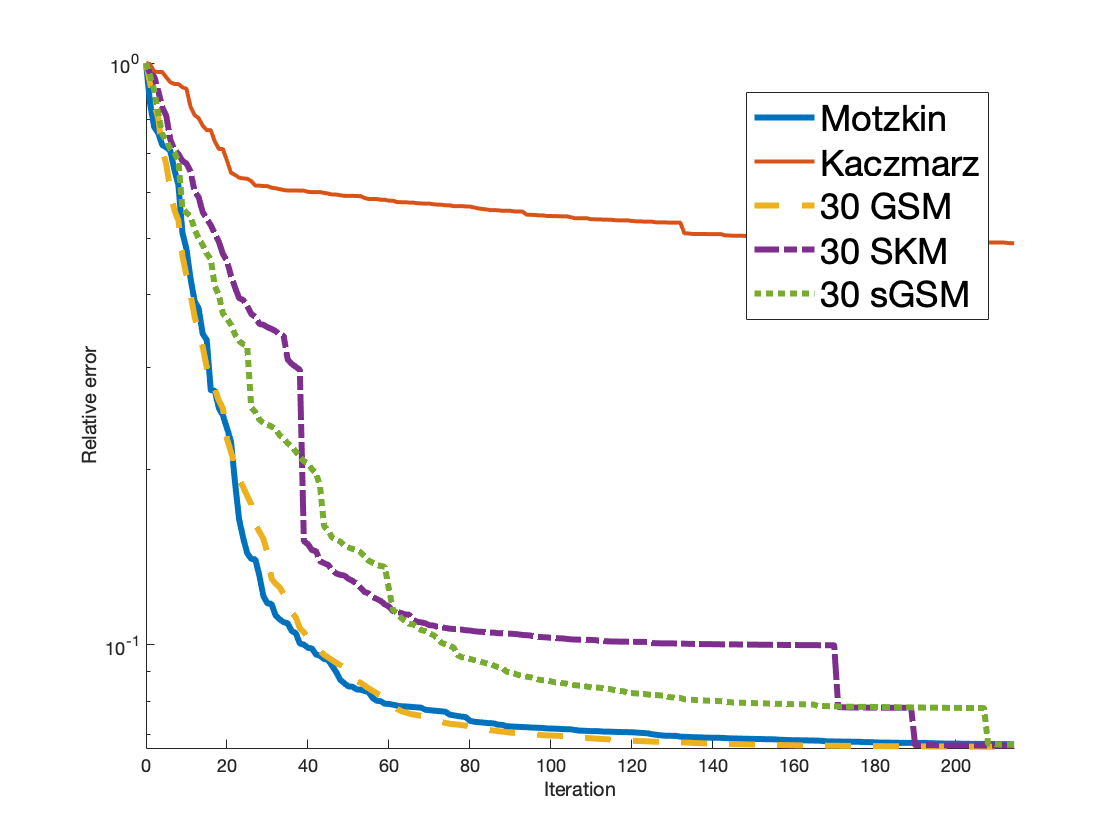}
\caption{Performance of the methods on a real-world dataset, COVTYPE ($5000 \times 54$)}
\label{iter_covtype}
\end{figure}

Finally, we compare various sketch sizes for Motzkin's method. As we can see in Figure~\ref{dep_on_block_size}, increasing the block size improves per iteration performance (which is expected since we keep more and more information about the system). Clearly, bigger blocks mean slower iterations, so the question about an optimal block size requires the comparison in time. In Figure~\ref{opt_block_size}, we compare the time required to achieve certain error threshold for sGSM methods, varying the block size $s$ from one to $n = 100$. We observe that there is a non-trivial optimal block size (somewhere between $5$ and $20$ in our experiments). 

\begin{figure}
\includegraphics[width=0.9\columnwidth]{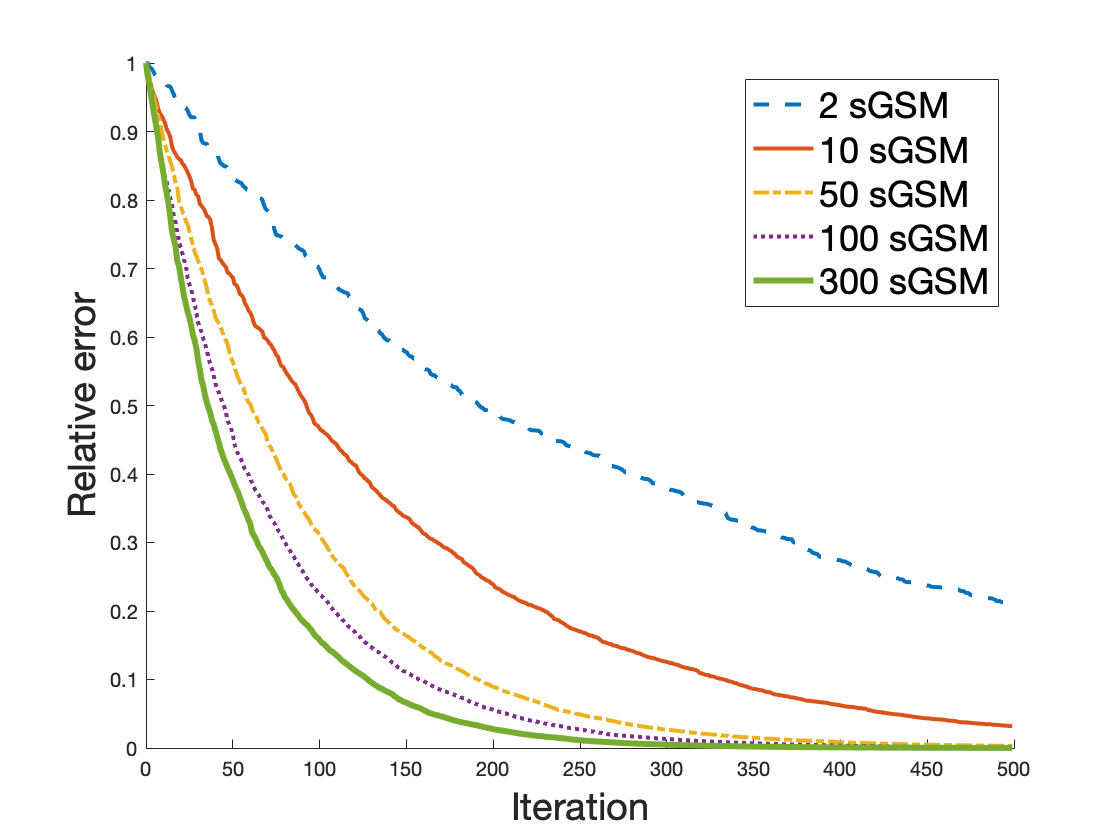}
\caption{Dependence of the convergence rate on the block size. $A = 5000 \times 100$ matrix with  i.i.d. $N(0,1)$ entries}
\label{dep_on_block_size}
\end{figure}
\begin{figure}
\includegraphics[width=0.9\columnwidth]{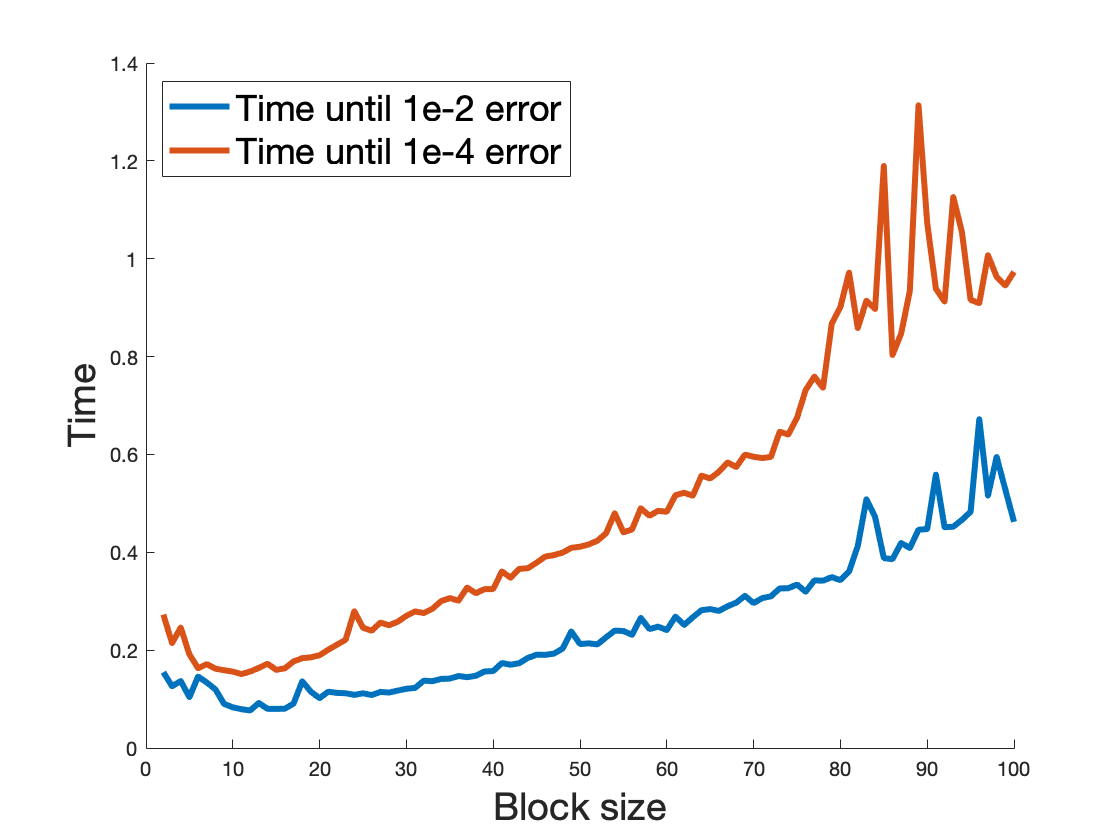}
\caption{sGSM method: $A = 5000 \times 100$ i.i.d. matrix from $Unif[0.8,1]$ model; time is averaged over 20 iterations}
\label{opt_block_size}
\end{figure}

\section{Conclusions} \label{conclusions}
In this paper, we analyzed the application of sketching to Motzkin's iterative method for solving consistent overdetermined large system of equations. We considered three ways to sketch Motzkin's method: SKM, GSM and sparseGSM. We provide theoretical guarantees for the accelerated convergence of GSM (and sparseGSM for a well-conditioned matrix). In the experiments section, we have shown some cases when sketched methods work better than both Kaczmarz and Motzkin (and when Gaussian sketches outperform SKM method). Finally, we investigate experimentally optimal block size for the sparseGSM method. One of the interesting future directions of the current work would be to investigate the dependence of the optimal block size on the model and provide theoretical estimates for it.

\section{Acknowledgement} The authors are grateful to and were partially supported by NSF CAREER DMS $\#1348721$ and NSF BIGDATA  $\#1740325$. We also acknowledge sponsorship by Capital Fund Management. 
\bibliography{liza-bib}
\end{document}